\newtheorem{theorem}{Theorem}[section]
\theoremstyle{definition}
\newtheorem{definition}[theorem]{Definition}
\theoremstyle{remark}
\newtheorem{remark}[theorem]{\bf Remark}
\newcommand{\C}{\mathbb C}
\newcommand{\G}{\mathbb G}
\newcommand{\Z}{\mathbb Z}
\newcommand{\R}{\mathbb R}
\newcommand{\SSS}{\mathbb S}
\begin{document}

\title[Hodge structures and Weierstrass $\sigma$-function]{Hodge structures and Weierstrass $\sigma$-function}

\author[G. Banaszak]{Grzegorz Banaszak*}
\address{Department of Mathematics and Computer Science, Adam Mickiewicz University,
Pozna\'{n} 61614, Poland}
\email{banaszak@amu.edu.pl}

\author[J. Milewski]{Jan Milewski**}
\address{Institute of Mathematics, Pozna\'n University of Technology,
ul. Piotrowo 3A, 60-965 Pozna\'n, Poland}
\email{jsmilew@wp.pl}

\subjclass[2010]{14D07}
\date{}
\keywords{Hodge structure, Weierstrass $\sigma$ function.}

\thanks{*Partially supported by the NCN (National center of Science for Poland)
NN201 607440 \newline
**Partially supported by the NCN grant NN201 373236}

\begin{abstract}
{In this paper we introduce new definition of Hodge structures and show that $\R$-Hodge structures are 
determined by $\R$-linear operators that are annihilated by the Weierstrass $\sigma$-function}
\end{abstract}

\maketitle


\section{Introduction}
 Classically a Hodge structure of a given weight can be defined in the four equivalent ways as follows (see e.g.
 \cite{G}, \cite{PS}):

\begin{definition}  {A Hodge structure of a weight $n$ on a real vector space $V$} consists of a 
finite--dimensional $\R$--vector space $V$ together with any of the following equivalent data:

(i) A decomposition $V_{\C} =\oplus _{p+q=n}V^{p,q}$, called the 
{\it Hodge decomposition}, such that 
$\overline{V^{p,q}}=V_{q,p}$.

(ii) A decreasing filtration $F^r_H V_{\C}$ of $V_{\C}$, called the 
{\it Hodge filtration}, such that 
$F^r_H V_{\C}\oplus \overline{V^{n-r+1}_{\C}}=V_{\C}$.

(iii) A homomorphism $h_n: \SSS \rightarrow {\rm GL} (V_{\R})$ of real algebraic groups, and also 
specifying that the weight of the Hodge structure is $n$.

(iv) A homomorphism $h_n: \SSS  \rightarrow {\rm GL} (V_{\R})$ of real algebraic groups such that via the decomposition 
$\G_{m}/{\R} \rightarrow \SSS  \rightarrow {\rm GL} (V_{\R})$ an element $t \in \G_{m}/{\R}$ acts as 
$t^{-n}\cdot Id$.
\label{def of Hodge str}\end{definition}

Throughout the paper we work with Hodge structures of various weights, hence by a Hodge structure we 
understand here
a finite direct sum 
\begin{equation}
\rho \,\,\, :=\,\,\,  \bigoplus_{j=1}^{k} \,\, h_{n_j}
\label{general Hodge structure}
\end{equation} 
of representations $h_{n_j}$ described in (iii) or (iv) of the Definition \ref{def of Hodge str}.

In this paper we consider Hodge structures on real vector space $V$ via representations of the Lie algebra of the real 
algebraic group $\SSS$ (denoted also ${\C}^{\times}$) on $V$. In section 2
we show that a Hodge structure can be treated as a pair of operators $E, T$ na $V$ satisfying certain conditions
(see Theorem  \ref{Hodge structure via E and T }).
In section 3 we show that a Hodge structure can be treated as a single operator $S := E + T$ on $V$ such that $\sigma (S) = 0$
for a Weierstrass $\sigma$-function
which corresponds to decomposition of $V$ into eigenspaces of operators $E$ and $T.$ 
Weirstrass  $\sigma$-function does not have multiple zeros hence this corresponds to the fact that complexification of 
$S$ does not have generalized eigenvectors other than ordinary ones. 

\section{Hodge structures and Lie algebras.}
\medskip

\noindent
The following theorem gives another definition of the Hodge structure.

\noindent
\begin{theorem} \label{Hodge structure via E and T } Let $V$ be a finite dimensional vector space over 
$\R$. 
There is a one to one correspondence between the family of Hogde structures on $V$ and the family of pairs of endomorphisms 
$E,T\in {\rm End}_{\R} (V)$ satisfying the following conditions:
\begin{equation} \label{hp}  
[E,T]=0, \quad \sin (\pi E) =0, \quad \sinh (\pi T) =0 ,
\end{equation}  
\begin{equation} \label{hk} 
\sin (\frac{\pi}{2} (E^2+T^2))=0 
\end{equation}
\end{theorem}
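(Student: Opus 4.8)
The plan is to pass through data (iii) of Definition \ref{def of Hodge str}: a (finite sum of) Hodge structure(s) on $V$ is the same thing as a homomorphism $h\colon\SSS\to{\rm GL}(V_\R)$ of real algebraic groups, and such a homomorphism is determined by its differential, i.e.\ by the action of the Lie algebra $\mathrm{Lie}(\SSS)$. Over $\C$ one has $\SSS_\C\cong\G_m\times\G_m$ with the two characters $z\mapsto z$ and $z\mapsto\bar z$, so $\mathrm{Lie}(\SSS)\otimes\C$ is spanned by two commuting semisimple elements; I would name generators so that the first acts on the $(p,q)$-piece by $-p$ (or $p$) and the second by $-q$ (or $q$). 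The real structure on $\SSS$ is the one whose $\R$-points are $\C^\times$, and complex conjugation swaps the two $\G_m$-factors; hence a \emph{real} representation corresponds to a commuting pair of \emph{real} operators $(E,T)$ on $V$, where $E$ is (a multiple of) $\mathfrak{p}+\bar{\mathfrak p}$ and $T$ is (a multiple of) $\mathfrak p-\bar{\mathfrak p}$ times $i$; concretely, on $V^{p,q}$ one wants $E$ to act as $p+q$ (the weight) and $T$ as $i(p-q)$, both of which are preserved by $\bar V^{p,q}=V^{q,p}$ and are therefore defined over $\R$. Conversely, given such a commuting pair I would build the Hodge decomposition of $V_\C$ by taking $V^{p,q}$ to be the intersection of the eigenspace of $E$ for eigenvalue $p+q$ and the eigenspace of $T$ for eigenvalue $i(p-q)$.

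With the dictionary set up, the content of the theorem is exactly that the conditions \eqref{hp}–\eqref{hk} are the precise algebraic constraints that cut out, among all commuting pairs $(E,T)$, those coming from an actual bigrading by integer pairs $(p,q)$. First, $[E,T]=0$ is forced because the two Lie-algebra generators commute. Next, $\sin(\pi E)=0$ says that $E$ is diagonalizable with integer eigenvalues (since $\sin(\pi x)$ vanishes to first order exactly at $x\in\Z$, annihilation by $\sin(\pi E)$ forces $E$ semisimple with spectrum in $\Z$); similarly $\sinh(\pi T)=0$, i.e.\ $\sin(\pi(-iT))=0$, says $-iT$ is semisimple with integer spectrum, so $T$ is semisimple with eigenvalues in $i\Z$ — these encode that $p+q$ and $p-q$ are integers. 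Finally, \eqref{hk}: on the simultaneous eigenspace where $E=a$ and $T=ib$ we have $E^2+T^2=a^2-b^2$, and writing $a=p+q$, $b=p-q$ gives $a^2-b^2=4pq$, so $\tfrac{\pi}{2}(E^2+T^2)=2\pi pq\in 2\pi\Z$ and $\sin$ of it vanishes; conversely $\sin(\tfrac\pi2(a^2-b^2))=0$ forces $a^2-b^2\in2\Z$, hence $a\equiv b\pmod 2$, which is exactly the parity condition that lets one solve $p=\tfrac{a+b}{2}$, $q=\tfrac{a-b}{2}$ in integers. I would organize this as two lemmas — "$\sin(\pi E)=0$ iff $E$ is semisimple with integer eigenvalues," and the parity statement — and then check that the two constructions (pair $\mapsto$ bigrading, bigrading $\mapsto$ pair) are mutually inverse, and that under them $\bar{V^{p,q}}=V^{q,p}$ holds on the Hodge side iff $E,T$ are real on the operator side.

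The routine direction is checking that a genuine Hodge structure produces $(E,T)$ satisfying \eqref{hp}–\eqref{hk}: one just reads off eigenvalues as above. The main obstacle is the converse: showing that the algebraic identities really do force semisimplicity and integrality, not merely that the eigenvalues lie in the right set. The key point is that $\sin(\pi z)$ and $\sinh(\pi z)$ and $\sin(\tfrac\pi2 z)$ have only \emph{simple} zeros, so an operator annihilated by one of these entire functions can have no nontrivial Jordan blocks — if $N$ is a nilpotent part then $f(E)=f(E_{ss})+f'(E_{ss})N+\cdots$ and the coefficient $f'$ of the degree-one term is nonzero at each eigenvalue, forcing $N=0$. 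Making this precise (e.g.\ by restricting to a single generalized eigenspace and expanding the entire function in a Taylor series there, or by invoking that $f(A)=0$ with $f$ having simple zeros on $\mathrm{spec}(A)$ implies the minimal polynomial of $A$ divides $\prod(x-\lambda)$) is the crux; after that the reconstruction of $h_n$ for each weight $n$ and assembling the direct sum \eqref{general Hodge structure} is formal, and the correspondence with (iii)/(iv) of Definition \ref{def of Hodge str} follows because a representation of the connected group $\SSS$ is determined by the Lie algebra action together with the requirement that $E$ (the weight operator) act by the right integer on each summand.
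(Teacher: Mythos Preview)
Your proposal is correct and follows essentially the same approach as the paper: both pass to the Lie algebra of $\SSS$, set $E={\mathcal L}(\rho)(1)$ and $T={\mathcal L}(\rho)(i)$ so that on $V^{p,q}$ they act by $p+q$ and $i(p-q)$, and both use the fact that $\sin(\pi z)$ and $\sinh(\pi z)$ have only simple zeros to deduce semisimplicity with integer (resp.\ $i\Z$) spectrum, with \eqref{hk} encoding the parity $a\equiv b\pmod 2$. The only cosmetic difference is that the paper decomposes $\rho$ into explicit two-dimensional real irreducibles $\rho_{p,q}$ and reconstructs the Hodge structure in the converse direction by exponentiating $\rho(e^{x+iy})=\exp(xE+yT)$, whereas you work directly with the complex bigrading $V^{p,q}$ and rebuild it from simultaneous eigenspaces; these are equivalent packagings via (i) versus (iii) of Definition~\ref{def of Hodge str}.
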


\begin{proof} Consider a Hodge structure on $V$. By (\ref{general Hodge structure}) 
(cf. Definition \ref{def of Hodge str} (iii))  this gives a representation:
$$\rho\, :\, \SSS \rightarrow {{\rm GL}} (V)$$
of real algebraic groups.
The representation $\rho$  decomposes into irreducible representations $\rho_{p, q}$ with multiplicities $m_{p,q}$  
\begin{equation}
\rho = \bigoplus_{q\leq p}\,\,\,  m_{p, q}\,\, \rho_{p, q},
\nonumber\end{equation}
\begin{equation}\rho_{p,q}(re^{i\phi}) \,\, := \,\, r^{p+q} \left[ \begin{array}{cc}
\cos(p-q)\phi & - \sin(p-q)\phi \\
\sin(p-q)\phi & \cos(p-q) \phi 
\end{array} \right] \;\;\;\; {\rm for} \; \; p\neq q, \; p,q \in{\Z}
\nonumber\end{equation}
\begin{equation}
\rho_{p,p} (re^{i\phi}) \, := \, r^{2p} 
\left[ \begin{array}{c}
1
\end{array}\right].
\nonumber\end{equation}
Certainly, the complexification of the representation $\rho_{p,q}$ for $q < p$ decomposes into two one-dimensional 
$\C$-vector spaces:
\begin{equation} 
\rho_{p,q} \otimes _{\R} {\C} = \rho_{p,q}^{\C} \oplus \rho_{q,p}^{\C} , 
\label{rho p q otimes C decomposition}\end{equation}
where
\begin{equation} \rho_{m,n}^{\C}  (z)=z^m {\overline z}^n .
\end{equation}

\noindent
Consider the real Lie algebra representation (the derivative of $\rho$):
$${\mathcal L}(\rho)\, : \, {\C} \rightarrow {{\rm End}} (V).$$
For $q \leq p$ the representation ${\mathcal L}(\rho_{{p, q}})$ is also two-dimensional
$${\mathcal L}(\rho_{{p, q}})(1) = (p + q) I\; \; {{\rm and}} \; \;  
{\mathcal L}(\rho_{{p, q}})(i) = (p - q) J ,$$
where
$$ I= \left[\begin{array}{cc} 1 & 0 \\  
0 & 1 \end{array} \right] , \;
J = \left[ \begin{array}{cc} 0 & -1 \\  
1 & 0 \end{array} \right]  $$
\medskip

\noindent
For $p=q$ 
$${\mathcal L}(\rho_{{p,p}})(1) = 2 p\; \; {{\rm and}} \; \;  
{\mathcal L}(\rho_{{p, p}})(i) = 0 .$$
If we put $E := {\mathcal L}(\rho)(1)$ and $T := {\mathcal L}(\rho)(i)$ then we get equations 
(\ref{hp}) and (\ref{hk}).
The condition (\ref{hk}) is fulfiled because $p-q$ and $p+q$ have the same parity.
\medskip

Now let us assume that conditions (\ref{hp}) and (\ref{hk}) hold.
Observe that $\sinh (z)$ and $\sin (z)$ have single zeros in the complex plane.
Moreover (\ref{hp}) and (\ref{hk}) imply that the 
complexifications $E \otimes 1$ and  $T \otimes 1 \in {\rm End} (V \otimes_{{\R}} {\C})$ have 
common eigenbasis. From this it follows that the endomorphisms 
$E, T \in {\rm End}_{\R} (V)$ have common Jordan decomposition into eigenspaces of dimension $1$ or $2.$ 
We define a representation 
$$\rho\, :\, {\C}^{\times} \rightarrow \rm{GL} (V),$$
$$\rho (e^{x +iy}) = \exp (x E + y T) \; \;  {\rm for} \; \; x,y \in {\R}.$$
$\rho$ is an algebraic representation, because the equality (\ref{hk}) holds. 
The representation $\rho$ gives the Hodge structure on $V. $ 
\end{proof}


\section{Hodge structures via single operator}
Let 
$\sigma (z) $ be the Weierstrass' sigma function for the lattice
generated by $\omega_1= 1-i$ and $\omega_2=1+i:$ 
\begin{equation}
\sigma (z) :=z \prod _{(k_1,k_2)
\neq (0,0)}\left( 1-\frac{z}{k_1\omega_1+k_2 \omega_2} \right)
\exp \left[\frac{z}{k_1\omega _1+k_2 \omega_2}+
\frac{1}{2}\left(\frac{z}{k_1\omega_1+k_2 \omega_2}\right)^2\right]
\nonumber\end{equation} 

\noindent
\begin{theorem}\label{Hodge structure via single operator} 
For operators $E, T \in {\rm End}_{\R} (V)$ considered above
let $S := E+T$. We get the following equality
\begin{equation}
\label{hw} \sigma (S)=0.
\end{equation}
Conversely every $S \in {\rm End}_{\R} (V)$ satisfying condition (\ref{hw})
gives a unique pair $(E,T)$ of operators in 
${\rm End}_{\R} (V)$ such that $S = E + T$ and the conditions (\ref{hp}) and (\ref{hk}) hold.
\end{theorem}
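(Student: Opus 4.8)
The plan is to exploit the common eigenbasis structure for $E$ and $T$ established in the proof of Theorem~\ref{Hodge structure via E and T }. First I would record the zero set of $\sigma$: by construction $\sigma(z)$ vanishes exactly at the lattice points $k_1\omega_1 + k_2\omega_2 = (k_1+k_2) + (k_2-k_1)i$, i.e.\ at all Gaussian integers $a + bi$ with $a \equiv b \pmod 2$, and these zeros are simple. Now for the forward direction: on each irreducible summand $\rho_{p,q}$ the complexified operators $E\otimes 1$ and $T\otimes 1$ act on the two eigenlines with eigenvalue pairs $(p+q, \pm(p-q)i)$ (and for $p=q$ the single eigenvalue pair $(2p,0)$). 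Hence $S\otimes 1 = E\otimes 1 + T\otimes 1$ is diagonalizable over $\C$ with eigenvalues $(p+q) \pm (p-q)i$. Since $p+q$ and $p-q$ have the same parity, each such eigenvalue is a zero of $\sigma$; as $S\otimes 1$ is diagonalizable, $\sigma(S\otimes 1) = 0$, and therefore $\sigma(S) = 0$ in ${\rm End}_\R(V)$.

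For the converse, suppose $\sigma(S) = 0$ for some $S \in {\rm End}_\R(V)$. Because $\sigma$ has only simple zeros, the standard argument (if a polynomial or entire function annihilating an operator has no repeated roots at the eigenvalues, the minimal polynomial is squarefree) shows $S\otimes 1$ is diagonalizable over $\C$ with all eigenvalues lying in the zero set $\Lambda := \{a+bi : a\equiv b \bmod 2\}$. The key step is then the reconstruction: given an eigenvalue $\lambda = a + bi \in \Lambda$, set $p := (a+b)/2$ and $q := (a-b)/2$, so $p,q \in \Z$, $p+q = a$, $p - q = b$, and define $E\otimes 1$ to act as $(p+q)$ and $T\otimes 1$ to act as $b\cdot i = (p-q)i$ on the corresponding eigenline. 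One must check that this is well defined and descends to real operators: since $S$ is real, its nonreal eigenvalues come in conjugate pairs $\lambda, \bar\lambda$ with conjugate eigenspaces, and the assignment $a+bi \mapsto (a, bi)$ is compatible with complex conjugation (it sends $\bar\lambda = a - bi$ to $(a, -bi)$), so $E\otimes 1$ and $T\otimes 1$ are defined over $\R$, giving $E, T \in {\rm End}_\R(V)$ with $E + T = S$. By construction $[E,T]=0$, $\sin(\pi E) = 0$ (eigenvalues of $E$ are integers $a$), $\sinh(\pi T) = 0$ (eigenvalues of $T$ are $bi$ with $b\in\Z$), and $\sin\bigl(\tfrac{\pi}{2}(E^2+T^2)\bigr) = 0$ since on each eigenline $E^2 + T^2 = a^2 - b^2 = (a-b)(a+b)$ is even; thus (\ref{hp}) and (\ref{hk}) hold.

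Finally I would address uniqueness of the pair $(E,T)$. Given $S$ with $\sigma(S)=0$, the eigenspace decomposition of $S\otimes 1$ is canonical, and on each eigenline with eigenvalue $\lambda = a+bi$ the requirement that $E$ have integer eigenvalue and $T$ have purely imaginary eigenvalue $bi$ with $E+T = S$ forces $E\otimes 1 = a\cdot\mathrm{Id}$ and $T\otimes 1 = bi\cdot\mathrm{Id}$ on that line — there is no other way to split $a+bi$ as (integer) $+$ (integer multiple of $i$). Hence $(E,T)$ is uniquely determined, and composing with the correspondence of Theorem~\ref{Hodge structure via E and T } recovers a unique Hodge structure. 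The main obstacle I anticipate is purely bookkeeping: verifying carefully that the eigenvalue-to-$(p,q)$ assignment is real, i.e.\ that $E\otimes 1$ and $T\otimes 1$ are stable under the conjugation defining the real structure on $V\otimes_\R\C$, so that they genuinely arise from operators on $V$; everything else is a routine consequence of $\sigma$ having simple zeros precisely on $\Lambda$.
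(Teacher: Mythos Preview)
Your proposal is correct and follows essentially the same route as the paper's proof: identify the zero set of $\sigma$ as the lattice $\Lambda = \{a+bi : a\equiv b \pmod 2\}$, use simplicity of the zeros to conclude that $S\otimes 1$ is diagonalizable with spectrum in $\Lambda$, and define $E$ and $T$ eigenspace by eigenspace via the splitting $a+bi = a + (bi)$. Your write-up is considerably more detailed than the paper's (which dispatches the forward direction with ``it is clear'' and does not spell out the real-descent check), and your uniqueness argument via the unique decomposition of $a+bi$ into an integer plus a purely imaginary integer is a cleaner way of finishing what the paper only begins by observing $[E',S]=0$, $[T',S]=0$; you might make explicit that these commutation relations (immediate from $[E',T']=0$) are what guarantee $E'$ and $T'$ preserve each $S$-eigenspace, after which your splitting argument applies.
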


\medskip

\noindent
\begin{proof} It is clear that $S = E + T$ satisfies the equation (\ref{hw}).
Conversely, assume that an operator $S \in End_{\R} (V)$ satisfies (\ref{hw}).
Since the $\sigma$ function has zeros of order 1, we observe that the complexification 
of $S$ is diagonalizable.
We get the operators $E$ and $T$ considering equation
\begin{equation} S (v) = \lambda v 
\end{equation}
in the complexification of $V$. 
The eigenvalues have integer real and imaginary parts with the same parity:
\begin{equation} \label{ogr} 
\lambda = a+ib, \quad\quad a,b \in {\bf Z}, \quad\quad a-b \in 2{\bf Z}. 
\end{equation}
Moreover we define the operators $E,T$ in such a way that their
complexifications acting on the eigenvector $v$ of $S$ have form:
$E (v) = av$ i $T(v) = ibv$ where $S (v) =(a+ib)v.$ Operators $E$ and $T$ 
satisfy equations (\ref{hp}) and (\ref{hk}). The operators $E$ and $T$ are uniquelly
determined. Indeed, if $S = E^{\prime} + T^{\prime}$ such that $E^{\prime}$ and 
$T^{\prime}$ satisfy (\ref{hp}) and (\ref{hk}) then it is clear that 
$[E^{\prime}, \, S] = 0$ and $[T^{\prime}, \, S] = 0.$  
\end{proof}
\medskip
\begin{remark}
For certain Hodge structures the set of eigenvalues of the comlexification of $S$ 
has further obstructions beyond (\ref{ogr}). In this case $S$ satisfies the equation
$g(S)=0,$ where $g (z)$ is an analytic function that divides $\sigma (z)$ in such a way that
$\frac{\sigma (z)}{g (z)}$ is also an analytic function on the whole complex plane.
\end{remark}

\begin{remark}
In our work in progress we define certain deformations of Hodge structures that arise in a natural
way in mathematical physics (see \cite{banmil}, \cite{holom1}, \cite{holom2}).
\end{remark}

\end{document}